\theoremstyle{plain}
\newtheorem{theorem}{Theorem}
\newtheorem{corollary}[theorem]{Corollary}
\newtheorem{lemma}[theorem]{Lemma}
\theoremstyle{definition}
\newtheorem{definition}[theorem]{Definition}
\newtheorem{example}[theorem]{Example}
\newcommand{\R}{\mathbb{R}}
\newcommand{\C}{\mathbb{C}}
\newcommand{\N}{\mathbb{N}}
\newcommand{\Z}{\mathbb{Z}}
\newcommand{\Q}{\mathbb{Q}}
\newcommand{\K}{\mathbb{K}}
\newcommand{\Gal}{\operatorname{Gal}}
\renewcommand{\l}{\lambda}
\title{Chebotarëv's nonvanishing minors for eigenvectors of random matrices and graphs}
\author{Tarek Emmrich}
\date{
}
\begin{document}


\maketitle

\begin{abstract}
    For a matrix $\mathbf{M} \in \mathbb{K}^{n \times n}$ we establish a condition on the Galois group of the characteristic polynomial $\varphi_\mathbf{M}$ that induces nonvanishing of the minors of the eigenvector matrix of $\mathbf{M}$. For $\mathbb{K}=\Z$ recent results by Eberhard show that, conditionally on the extended Riemann hypothesis, this condition is satisfied with high probability\footnote{We say "with high probability" for probability $1-o(1)$ as $n \to \infty$. } and hence with high probability the minors of eigenvector matrices of random integer matrices are nonzero. For random graphs this yields a novel uncertainty principle, related to Chebotarëv's theorem on the roots of unity and results from Tao\cite{Tao} and Meshulam\cite{Meshulam}. We also show the application in graph signal processing and the connection to the rank of the walk matrix.
    
    \medskip
 	\noindent\emph{Keywords}:
    Nonvanishing minors, Spectral graph theory. %
 	
 	\medskip
 	\noindent\emph{2020 AMS Mathematics Subject Classification} : \text{
      05C25, 
      65T50, 
      15B52, 
      05C50, 
 	}
    \end{abstract}

\section{Introduction}

For the Fourier matrix there is Chebotarëv's famous theorem on the nonvanishing of the minors, see \cite{Chebotarev} for a survey.
\begin{theorem}{\cite{Chebotarev}}
    Let $p$ be a prime and $\mathbf{F}$ be the Fourier matrix of order $p$, i.e., $\mathbf{F}_{i,j}=\omega^{(i-1)(j-1)}$ for $\omega=e^{\frac{2 \pi i}{p}}$. Then all minors of $\mathbf{F}$ are nonzero.
\end{theorem}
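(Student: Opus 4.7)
The plan is to reach a contradiction via a $p$-adic valuation argument in the cyclotomic ring $\mathbb{Z}[\omega]$. Writing $a_r=i_r-1$ and $b_s=j_s-1$ for the row and column indices of a chosen $k\times k$ submatrix, I would introduce the auxiliary polynomial
$$
P(x)\;=\;\det\!\bigl(x^{a_r b_s}\bigr)_{r,s=1}^{k}\in\mathbb{Z}[x],
$$
so that $P(\omega)$ is exactly the minor in question. Assume for contradiction that $P(\omega)=0$ and factor $P(x)=(x-1)^e Q(x)$ in $\mathbb{Z}[x]$ with $Q(1)\neq 0$. Since $\Phi_p$ is Eisenstein at $p$ after the substitution $x=1+y$, the element $\pi=1-\omega$ generates the unique prime of $\mathbb{Z}[\omega]$ above $p$ and $(p)=(\pi)^{p-1}$; with $v=v_\pi$ the associated valuation,
$$
v(P(\omega))\;=\;e\cdot v(\omega-1)+v(Q(\omega))\;=\;e+v(Q(\omega)).
$$

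The central step is to compute both $e$ and $Q(1)$ explicitly. Writing $x=1+t$ and expanding $(1+t)^{ab}=\sum_m\binom{ab}{m}t^m$, I would run iterated row-and-column reductions on the matrix $((1+t)^{a_rb_s})$, peeling off one factor of $t$ from each row after the first and each column after the first, then recursing on the reduced block. Tracking leading terms in $t$ is expected to yield
$$
P(1+t)\;=\;t^{\binom{k}{2}}\!\left(\frac{\prod_{1\le r<s\le k}(a_s-a_r)(b_s-b_r)}{0!\,1!\cdots(k-1)!}+O(t)\right),
$$
so that $e=\binom{k}{2}$ and $Q(1)$ equals the displayed Vandermonde-type rational.

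The proof then concludes quickly from primality of $p$. The numerator of $Q(1)$ is a product of differences of distinct residues in $\{0,\dots,p-1\}$, hence is coprime to $p$, and each factorial $m!$ with $m\le k-1\le p-1$ involves only integers smaller than $p$, so the denominator is also coprime to $p$. Therefore $v(Q(1))=0$, and since $Q(\omega)\equiv Q(1)\pmod{\pi}$ we also get $v(Q(\omega))=0$. Thus $v(P(\omega))=\binom{k}{2}$ is finite, contradicting $P(\omega)=0$.

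I expect the main obstacle to be the explicit row/column reduction producing the Vandermonde-type leading coefficient: verifying both the exact order of vanishing $\binom{k}{2}$ and the precise formula for $Q(1)$ requires careful bookkeeping of signs and binomial coefficients. Once that computation is in hand, the $p$-adic finish and the appeal to primality are essentially a one-liner, and the argument makes clear that primality is used only to prevent a factor of $p$ from dividing a difference $a_s-a_r$ or $b_s-b_r$.
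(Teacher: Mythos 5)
The paper does not prove this statement; it is quoted as a known theorem with a citation to the survey literature, so there is no in-paper argument to compare against. Your proposal is a correct proof, and it is essentially the classical valuation-theoretic argument for Chebotar\"ev's theorem (the route via the prime $\pi=1-\omega$, popularized in expositions by Frenkel, Stevenhagen--Lenstra, and Tao). The one step you flag as the main obstacle, namely
\[
\det\bigl((1+t)^{a_rb_s}\bigr)_{r,s=1}^{k}\;=\;t^{\binom{k}{2}}\left(\frac{\prod_{1\le r<s\le k}(a_s-a_r)(b_s-b_r)}{0!\,1!\cdots(k-1)!}+O(t)\right),
\]
is true and can be obtained with less bookkeeping than iterated row/column reduction: write $\binom{ab}{m}=\sum_{j\le m}c_{m,j}(ab)^j$ with $c_{m,m}=1/m!$, so that $(1+t)^{a_rb_s}=\sum_{j\ge 0}a_r^{j}b_s^{j}g_j(t)$ with $g_j(t)=t^{j}/j!+O(t^{j+1})$; the Cauchy--Binet formula then expresses the determinant as a sum over $k$-subsets $J\subseteq\{0,1,2,\dots\}$ of $\det(a_r^{j})_{j\in J}\det(b_s^{j})_{j\in J}\prod_{j\in J}g_j(t)$, and the unique term of minimal order $\binom{k}{2}$ comes from $J=\{0,\dots,k-1\}$, giving exactly the displayed Vandermonde quotient (nonzero because the $a_r$ are distinct and the $b_s$ are distinct). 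The rest of your argument is sound: $\pi=1-\omega$ generates the unique prime of $\Z[\omega]$ above $p$ with $v_\pi(p)=p-1$; the integer $Q(1)$ is coprime to $p$ because every difference $a_s-a_r$, $b_s-b_r$ and every factor of $0!\,1!\cdots(k-1)!$ is nonzero and of absolute value less than $p$ (this is precisely where primality enters); and $Q(\omega)\equiv Q(1)\not\equiv 0\pmod{\pi}$ forces $v_\pi(P(\omega))=\binom{k}{2}<\infty$, hence $P(\omega)\ne 0$.
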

From the viewpoint of spectral graph theory, the matrix $\mathbf{F}$ is a possible eigenvector matrix of the circle graph and this yields the uncertainty principle
\[
\|f\|_0 + \| \hat{f}\|_0 \geq p+1
\]
for any function $f$ on the circle graph and its Graph Fourier transform $\hat{f}=\mathbf{F}^*f$, see Tao \cite{Tao} and Meshulam \cite{Meshulam} for uncertainty principles. We will prove the following criterion on nonvanishing of the minors of the eigenvector matrix of any matrix $\mathbf{M} \in \mathbb{K}^{n \times n}$. Let $\varphi_\mathbf{M}$ be the characteristic polynomial of $\mathbf{M}$ and $\mathbb{L}$ its splitting field. We denote by $\operatorname{Gal}(\varphi_\mathbf{M})$ the Galois group of the field extension $\mathbb{K} \subseteq \mathbb{L}$. Furthermore, if $\mathbf{M}$ is diagonalizable, let $\mathbf{U}$ be the matrix with columns $u_i$ that are eigenvectors corresponding to the eigenvalue $\lambda_i$ of $\mathbf{M}$.
\begin{theorem}\label{MainTheorem1}
    Let $\mathbf{U}$ be the eigenvector matrix of a diagonalizable matrix $\mathbf{M}$. If $\Gal(\varphi_\mathbf{M})\geq A_n$, then all minors of $\mathbf{U}$ are nonzero.
\end{theorem}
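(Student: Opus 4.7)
The plan is to convert nonvanishing of minors into a transitivity statement about the Galois action of $G := \Gal(\varphi_\mathbf{M})$ on the eigenvectors. First I would observe that $G \geq A_n$ forces $\varphi_\mathbf{M}$ to have $n$ distinct roots, since otherwise $G$ would act faithfully on fewer than $n$ symbols and so embed in some $S_d$ with $d < n$, which is impossible on cardinality grounds. Thus $\mathbf{M}$ has simple spectrum over the splitting field $\mathbb{L}$ and $\mathbf{U}$ is invertible. After fixing once and for all a representative $u_i \in \mathbb{L}^n$ in each one-dimensional eigenspace, I would apply $\sigma \in G$ to the equation $\mathbf{M} u_i = \lambda_i u_i$ (the matrix $\mathbf{M}$ is fixed entrywise by $\sigma$) to derive the transformation law $\sigma(u_i) = c_i^\sigma u_{\pi_\sigma(i)}$ for some $c_i^\sigma \in \mathbb{L}^\times$, where $\pi_\sigma$ denotes the permutation induced by $\sigma$ on the eigenvalues.

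Next, I would use multilinearity of the determinant to show that $\sigma$ sends any $k \times k$ minor $\det \mathbf{U}_{I,J}$ to a nonzero scalar multiple of $\det \mathbf{U}_{I,\pi_\sigma(J)}$. Consequently, for fixed rows $I$, the collection $Z_I := \{J \in \binom{[n]}{|I|} : \det \mathbf{U}_{I,J} = 0\}$ is stable under the induced $G$-action on $\binom{[n]}{|I|}$. Because $A_n$ is $(n-2)$-transitive, it acts transitively on $k$-subsets of $[n]$ for every $k$, so $Z_I$ must be either empty or all of $\binom{[n]}{|I|}$.

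To conclude, I would rule out the ``all vanish'' case by contradiction: if every $|I| \times |I|$ minor with row set $I$ vanished, the corresponding $|I|$ rows of $\mathbf{U}$ would be linearly dependent, contradicting invertibility of $\mathbf{U}$. I expect the main subtlety to be formalising the Galois action on eigenvectors cleanly, since the $u_i$ are only defined up to scaling; this is harmless because the vanishing of a minor is unaffected by any rescaling of the eigenvectors, but one has to record this at the outset. The remainder of the argument is Galois-theoretic bookkeeping plus the standard transitivity of $A_n$ on subsets.
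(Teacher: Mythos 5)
Your proposal is correct and follows essentially the same route as the paper: use the Galois action (via transitivity of $A_n$ on $m$-subsets) to propagate one vanishing minor to every column set with the same row set $W$, then contradict the full row rank of $\mathbf{U}_{W,[n]}$. The only difference is presentational — you track the action on determinants via multilinearity and are more explicit about the simple spectrum and the scaling ambiguity $\sigma(u_i)=c_i^\sigma u_{\pi_\sigma(i)}$, points the paper passes over — but the core mechanism is identical.
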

Note that for $\operatorname{Gal}(\varphi_\mathbf{M})\geq A_n$, $\mathbf{M}$ is always diagonalizable. For various reasons we care about the minors of the eigenvector matrices of operators an graphs. The two widely studied operators on graphs are the adjacency matrix and the Laplacian matrix. For directed graphs the directed adjacency matrix seems to be the most interesting.


For a fixed finitely supported measure $\mu$ on $\Z$, let $\mathbf{M}$ be a random matrix with $m_{i,j} \sim \mu$. In this case Eberhard \cite{Eberhard} proved that, conditionally on the extended Riemann hypothesis, with high probability $\varphi_\mathbf{M}$ is irreducible and $\operatorname{Gal}(\varphi_\mathbf{M}) \geq A_n$.
This directly leads to the following corollary, which induces an uncertainty principle for directed graphs and their adjacency matrices.
\begin{corollary}\label{maincorollary}
    Assume the extended Riemann hypothesis. For a random matrix $\mathbf{M}$, with $m_{i,j} \sim \mu$, with high probability all minors of the eigenvector matrix $\mathbf{U}$ of $\mathbf{M}$ are nonzero.
\end{corollary}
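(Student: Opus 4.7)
The plan is to obtain the corollary as a direct combination of Theorem~\ref{MainTheorem1} with the theorem of Eberhard~\cite{Eberhard} quoted just above it; no new ingredient is required beyond checking that the hypotheses of Theorem~\ref{MainTheorem1} are met with high probability on the random matrix $\mathbf{M}$.

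First, I would invoke Eberhard's theorem, which asserts that, conditionally on the extended Riemann hypothesis, the event $\{\Gal(\varphi_\mathbf{M}) \geq A_n\}$ has probability $1-o(1)$ as $n \to \infty$ when $\mathbf{M}$ has i.i.d.\ entries distributed as $\mu$. On this event, the Galois group (viewed as a subgroup of the symmetric group acting on the roots of $\varphi_\mathbf{M}$) contains $A_n$, which acts transitively on $n$ distinct symbols; hence $\varphi_\mathbf{M}$ is separable with $n$ distinct roots, $\mathbf{M}$ is diagonalizable over $\C$, and the eigenvector matrix $\mathbf{U}$ is well-defined. Theorem~\ref{MainTheorem1} then applies pointwise on this event and yields that every minor of $\mathbf{U}$ is nonzero, which is exactly the conclusion of the corollary.

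The main obstacle, therefore, lies outside of this final assembly step: all of the depth is inherited from (i) Theorem~\ref{MainTheorem1}, where the Galois-theoretic hypothesis has to be converted into nonvanishing of polynomial expressions in the eigenvectors, and (ii) Eberhard's result, whose proof uses the extended Riemann hypothesis to control the distribution of $\Gal(\varphi_\mathbf{M})$ for random integer matrices. Once both are in hand, the combination is immediate, and the only thing I would still verify carefully is the minor compatibility remark that $\Gal(\varphi_\mathbf{M}) \geq A_n$ automatically implies diagonalizability, so that invoking Theorem~\ref{MainTheorem1} is legitimate on the full high-probability event rather than only on its intersection with a separate diagonalizability event.
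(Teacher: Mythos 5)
Your proposal is correct and follows exactly the paper's route: the paper derives the corollary in one line by combining Eberhard's Theorem 1.3 (Galois group is $A_n$ or $S_n$ with high probability under ERH) with Theorem~\ref{MainTheorem1}, relying on the remark that $\Gal(\varphi_\mathbf{M})\geq A_n$ forces distinct eigenvalues and hence diagonalizability. Your additional care about verifying diagonalizability on the high-probability event is precisely the point the paper flags in the introduction.
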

Following Eberhard's results, Ferber, Jain, Sah and Sawhney \cite{FerberJain} could show that the Galois group of a random symmetric matrix is almost surely transitive, which we will see is not sufficient for minors of arbitrary size to be nonzero, but for minors of size one.
\begin{corollary}\label{corollaryadjacency}
    Let $\mathbf{M}$ be the adjacency matrix of a random Erdős–Rényi graph $G \sim G(n,p)$ and assume the extended Riemann hypothesis. Then with high probability no eigenvector of $\mathbf{M}$ contains a zero. 
\end{corollary}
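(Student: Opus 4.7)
The strategy is to leverage the Ferber--Jain--Sah--Sawhney result cited above, which (conditional on GRH) guarantees that the Galois group of $\varphi_{\mathbf{M}}$ is transitive with high probability when $\mathbf{M}$ is a random symmetric matrix such as the adjacency matrix of $G(n,p)$. Since $\varphi_{\mathbf{M}}$ has integer coefficients, transitivity of the Galois group is equivalent to irreducibility of $\varphi_{\mathbf{M}}$, which in particular means the eigenvalues $\lambda_1,\dots,\lambda_n$ are distinct and each eigenspace is one-dimensional; so $\mathbf{M}$ is diagonalizable and the eigenvector matrix $\mathbf{U}$ is well-defined (up to scaling of columns) and invertible.

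First, I would fix the eigenvalue $\lambda_1$ and pick a nonzero eigenvector $u_1$ whose entries lie in $\Q(\lambda_1)$; this is possible because $\ker(\mathbf{M}-\lambda_1 I)$ is a one-dimensional subspace definable over $\Q(\lambda_1)$ (for instance, by reading off the nonzero column of $\operatorname{adj}(\mathbf{M}-\lambda_1 I)$). Second, for each $i$ I would use transitivity of $\Gal(\varphi_{\mathbf{M}})$ to pick $\sigma_i$ sending $\lambda_1$ to $\lambda_i$, and set $u_i := \sigma_i(u_1)$, where $\sigma_i$ acts entrywise. Since $\mathbf{M}$ has integer entries fixed by Galois, $\mathbf{M}u_i = \sigma_i(\mathbf{M}u_1) = \sigma_i(\lambda_1 u_1) = \lambda_i u_i$, so $u_i$ is indeed an eigenvector for $\lambda_i$.

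Third, I would argue by contradiction: suppose some entry $(u_1)_k$ vanishes. Then, for every $i$, $(u_i)_k = \sigma_i((u_1)_k) = 0$, so the entire $k$-th row of $\mathbf{U}=(u_1\mid\cdots\mid u_n)$ is zero, contradicting the invertibility of $\mathbf{U}$. Because each eigenspace is one-dimensional, every eigenvector of $\mathbf{M}$ for $\lambda_i$ is a scalar multiple of $u_i$ and therefore has the same zero/nonzero pattern; hence no eigenvector of $\mathbf{M}$ contains a zero.

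The only genuinely delicate step is the second one: one must verify that the coordinates of $u_1$ can be chosen in $\Q(\lambda_1)$ so that Galois conjugation yields well-defined eigenvectors for the conjugate eigenvalues. Once this is set up, the rest is essentially a one-line orbit argument, and indeed it is the natural specialisation to $1\times 1$ minors of the more general principle underlying Theorem~\ref{MainTheorem1} --- which explains why transitivity of the Galois group suffices here even though the full alternating-group condition is needed for larger minors.
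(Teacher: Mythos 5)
Your proposal is correct and follows essentially the same route as the paper: invoke Ferber--Jain--Sah--Sawhney for irreducibility (hence transitivity of the Galois group) with high probability, then run the Galois-orbit argument on a single entry of an eigenvector, which is exactly the $m=1$ case of the paper's Theorem~\ref{MainTheorem1} argument. Your treatment is in fact slightly more careful than the paper's, since you explicitly justify that eigenvectors can be chosen with entries in $\Q(\lambda_1)$ so that conjugation is well defined, a point the paper passes over when it asserts $g(u_i)=u_j$.
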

Working with the Laplacian matrix of a random graph on $n$ vertices has the difficulty that the rank of the Laplacian matrix of a connected graph is $n-1$ by default. We can overcome this problem by using the factorization $\varphi_\mathbf{L}(\l)=\lambda \cdot \xi(\lambda)$, but still known results for random matrices do not apply. For applications, the most interesting case is the following uncertainty principle that follows directly from our results.
\begin{theorem}\label{GraphThm}
    Let $G$ be a graph on $n$ vertices and $\varphi_\mathbf{L}(\l)=\l\cdot \xi(\l)$. If $\operatorname{Gal}(\xi)\geq A_{n-1}$, then all minors of $\mathbf{U}$ are nonzero, i.e.
    \[
    \|f\|_0 + \| \hat{f}\|_0 \geq n+1
    \]
    for any signal $f \neq 0$ on the graph $G$. Here the Graph Fourier transform $\hat{f}$ is given by $\hat{f}=\mathbf{U}^{*}f$.
\end{theorem}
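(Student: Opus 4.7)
The plan is to deduce Theorem~\ref{GraphThm} by adapting the method of Theorem~\ref{MainTheorem1} to the restriction of $\mathbf{L}$ to $\mathbf{1}^{\perp}$, handling the trivial eigenvector of $\mathbf{L}$ via orthogonality of $\mathbf{U}$. Since $\Gal(\xi) \geq A_{n-1}$ forces $\xi$ to be irreducible, in particular $\xi(0) \neq 0$, so the eigenvalue $0$ of $\mathbf{L}$ is simple; hence $G$ is connected and the column of $\mathbf{U}$ corresponding to $0$ may be taken to be $\mathbf{1}/\sqrt{n}$, call it the first column. Because $\mathbf{L}$ is symmetric, $\mathbf{U}$ is orthogonal, and Jacobi's identity for complementary minors gives $\det(\mathbf{U}_{I,J}) = \pm \det(\mathbf{U})\det(\mathbf{U}_{I^{c},J^{c}})$ whenever $|I|=|J|$. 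Consequently $\det(\mathbf{U}_{I,J}) = 0$ if and only if $\det(\mathbf{U}_{I^{c},J^{c}}) = 0$, and by passing to complements when necessary it suffices to prove nonvanishing for minors satisfying $1 \notin J$, i.e.\ those not involving the trivial eigenvector column.

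For such a minor, the relevant columns $u_j$ are eigenvectors for roots of $\xi$, and by the adjugate formula each may be chosen with entries polynomial in $\lambda_j$. Thus $\det(\mathbf{U}_{I,J})$ equals, up to a nonzero scalar, a polynomial expression in the roots of $\xi$. The Galois-theoretic argument behind Theorem~\ref{MainTheorem1} — which relies precisely on this polynomial representation of eigenvectors together with sufficient transitivity of the Galois group on the eigenvalues appearing in the minor — then carries over verbatim, with $\Gal(\xi) \geq A_{n-1}$ playing the role of $\Gal(\varphi_{\mathbf{M}}) \geq A_{n}$: a vanishing minor would produce a polynomial identity among the roots of $\xi$ inconsistent with $A_{n-1}$-transitivity on subsets of the relevant size.

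Nonvanishing of all minors then yields the uncertainty principle by the standard duality. If some $f \neq 0$ satisfied $\|f\|_0 = a$ and $\|\hat f\|_0 = b$ with $a+b \leq n$, then choosing $I = \mathrm{supp}(f)$ and any $a$-subset $J \subseteq \mathrm{supp}(\hat f)^{c}$ would give $(\mathbf{U}^{*})_{J,I}\, f_I = 0$ with $f_I \neq 0$, forcing $\det(\mathbf{U}_{I,J}) = 0$, a contradiction. The main obstacle is the middle step: one must verify that the proof of Theorem~\ref{MainTheorem1} is insensitive to whether the ambient dimension of the eigenvectors equals the number of eigenvalues on which the Galois group acts, so that an $A_{n-1}$-action on $n-1$ of the $n$ eigenvalues of $\mathbf{L}$ suffices to rule out the relevant polynomial relations. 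Provided that proof is phrased purely in terms of adjugates and the Galois orbit of the columns appearing in the minor, the reduction to $\mathbf{1}^{\perp}$ is essentially cost-free.
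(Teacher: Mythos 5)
Your overall skeleton matches the paper's: reduce via complementary minors of the orthogonal matrix $\mathbf{U}$ to the case where the minor avoids the column of the trivial eigenvector, then run the Galois-homogeneity argument on the roots of $\xi$. (The paper uses its Lemma \ref{complement} where you invoke Jacobi's identity for complementary minors; these are interchangeable.) However, the step you yourself flag as ``the main obstacle'' is a genuine gap, and it is not ``essentially cost-free.'' In the proof of Theorem \ref{MainTheorem1}, a vanishing minor on columns $S$ propagates by $m$-homogeneity to every $S' \in \binom{[n]}{m}$, giving $\operatorname{rank}(\mathbf{U}_{W,[n]}) \leq m-1$; the contradiction is that $\mathbf{U}_{W,[n]}$ consists of $m$ full rows of an invertible matrix and therefore has rank $m$. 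In the Laplacian setting with $1 \notin S$, homogeneity of $\Gal(\xi)$ only propagates the vanishing over $S' \in \binom{\{2,\ldots,n\}}{m}$, so you only conclude $\operatorname{rank}\left(\mathbf{U}_{W,\{2,\ldots,n\}}\right) \leq m-1$. This is not automatically a contradiction: the $m$ rows of $\mathbf{U}$ indexed by $W$, after deleting their first coordinate, may well become dependent (if $u_1$ were a standard basis vector $e_k$, the entire row $\mathbf{U}_{\{k\},\{2,\ldots,n\}}$ would vanish and no contradiction would arise). So the argument genuinely depends on more than ``adjugates and the Galois orbit of the columns appearing in the minor.''

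The missing ingredient is the specific fact that the omitted eigenvector is $u_1 = \mathds{1}/\sqrt{n}$, which has no zero entries. A row dependence $\sum_{w\in W} a_w \tilde{u}(w)=0$, with $\tilde{u}(w)=(u_i(w))_{i=2,\ldots,n}$ and $a$ supported on $W$, means $a^{T}\mathbf{U}=c\,e_1^{T}$ for some scalar $c$, hence $a=c\,\mathbf{U}e_1=c\,u_1$; since $a$ is supported on $W \subsetneq [n]$ while $u_1$ has full support, this is impossible. (The paper reaches the same contradiction by pairing the dependence against a row $u(v)$ with $v\notin W$ and using row orthogonality to extract $u_1(v)\sum_{w\in W}a_w u_1(w)=0$ with both factors nonzero.) This final step uses connectedness of $G$ and the explicit kernel vector $\mathds{1}$, not merely irreducibility of $\xi$ or transitivity of its Galois group; without it your proof is incomplete.
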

The Fourier matrix can be seen as the eigenvector matrix of the shift matrix. Its characteristic polynomial is
\[
\lambda^p-1=(\lambda-1)\cdot \sum_{k=0}^{p-1}\lambda^k:=\lambda \cdot \xi(\lambda)
\]
with splitting field $\mathbb{L}=\Q[\lambda]/\xi(\lambda)$ and Galois group $(\Z/p\Z)^\times$, which is surprisingly not sufficient for our results to hold.

The eigenvectors of random matrices with continuous distribution have been widely studied in the past, for a survey see \cite{Survey}, but as far as we know the minors have not been studied in the discrete case.
In the discrete setting the most common application are graphs. Brooks and Lindenstrauss \cite{Lindenstrauss} proved a non-localization for the eigenvectors of the normalized Laplacian matrix of large regular graphs, i.e. small subsets of vertices only contain a small part of the mass of an eigenvector. Alt, Ducatez and Knowles \cite{JohannesAlt} determined regions for the edge probability $p=\frac{d}{N}$ of an Erdős–Rényi graph, such that the eigenvectors are delocalized. Both statements are relatively far away from a statement about minors. A very well studied object is the \textit{walk matrix} $\mathbf{W}$ of a graph and we will establish a connection between this rank and the minors of $\mathbf{U}$. The construction is well known in other areas of mathematics as Krylov spaces \cite{Saad}.

\textbf{Outline.} After introducing notations and the necessary background in Section 2, we will establish our main results and a slight variation of it in Section 3. We discuss several applications in Section 4. We end by extending the results to Laplacian matrices and by giving examples.


\section{Background}
In this section we will shortly introduce some notation and the background of Galois theory, for a more detailed summary see \cite{Morandi}. For a polynomial $\varphi \in \Q[\lambda]$ of degree $n$ the fundamental theorem of algebra tells us that $\varphi$ has exactly $n$ roots $\lambda_i$, $i=1,\ldots,n,$ over the complex numbers $\C$. Instead of working over the complex numbers, we will work over the \textit{splitting field} $\mathbb{L}=\Q(\lambda_i \colon i \in [n])$ of $\varphi$, which is the smallest field extension of $\Q$ that contains all roots of $\varphi$. The dimension of the splitting field is bounded by
\[
\operatorname{dim}_\Q(\mathbb{L})\leq n! \ .
\]
The splitting field can also be constructed inductively by the step $\Q \subseteq \mathbb{K}_1:=\Q[\lambda]/R(\lambda)$ that adds at least one root of $\varphi$, where $R(\lambda)$ is an irreducible factor of $\varphi$. After at most $n$ steps all roots of $\varphi$ are contained in the resulting field $\mathbb{L}$. The structure of this adding process is encoded in the Galois group \[\Gal(\varphi):=\operatorname{Aut}_{\Q}(\mathbb{L})\subseteq \operatorname{Sym}(\{\lambda_1,\ldots,\lambda_n\})\cong S_n.\]
In most cases we will interpret the Galois group of any polynomial as a group acting on the set $[n]$ by the given isomorphism. In the generic case over the rational numbers, Hilbert's irreducibility theorem tells us that  \[\Gal(\varphi)\cong S_n.\]
Two algebraic numbers $\mu_1, \mu_2$ are called \textit{conjugate}, if they have the same minimal polynomial $\varphi$ over $\Q$. For two conjugate algebraic numbers $\mu_1, \mu_2$, there is a $g \in \Gal(\varphi)$ with $g(\mu_1)=\mu_2$. We call a group $H \subseteq \operatorname{Sym}(\{\l_1,\ldots,\l_n\})$ \textit{$m$-homogeneous}, if for all $S,S' \in \binom{[n]}{m}$ there exists a $g \in H$ such that
\[g(\{\lambda_i | i \in S\})=\{\lambda_i | i \in S'\}.\]
The prior fact tells us, that the Galois group of an irreducible polynomial is $1$-homogeneous (which in this case equals transitivity). For a matrix $\mathbf{U}$ and $W,S\subseteq [n]$ we write $\mathbf{U}_{W,S}$ for the submatrix of $\mathbf{U}$ with rows induced by $W$ and columns induced by $S$. 

For a tuple $B=(b_1,\ldots,b_k) \in \C^k$ we define the Vandermonde matrix
\[\mathbf{V}(B)=\left(b_j^i\right)_{{j=1,\ldots,k} \atop {i=0,\ldots,k-1}}.\]
It is well known that this matrix is invertible if and only if all values in $B$ are distinct.

For a finite set $V$ and $E\subseteq \binom{V}{2}$ we call $G=(V,E)$ a \textit{graph} and we often assume $V=[n]$. The elements of $V$ are called the \textit{vertices.} We say two vertices $v,w$ are connected, if $\{v,w\} \in E$ and sometimes write $v \sim w$. For two vertices $v,w$ the \textit{distance} $d(v,w)$ is the smallest integer $r$ such that there exist vertices $v=v_0, v_1, \ldots , v_r=w$ with $\{v_k,v_{k+1}\} \in E$.
The \textit{adjacency matrix} $\mathbf{A} \in \{0,1\}^{n \times n}$ of $G$ is defined by
\[
    \mathbf{A}_{v,w} = \begin{cases} 1, \text{ if } \{v,w\}\in E, \\ 0 , \text{ otherwise}.
    \end{cases}
\]
The \textit{combinatorial Laplacian matrix} $\mathbf{L} \in \Z^{n \times n}$ is the matrix $\mathbf{L}=\mathbf{D}-\mathbf{A}$, with $\mathbf{D}_{v,v}=\operatorname{degr}(v)$. For a connected graph, the rank of the Laplacian matrix is $n-1$ with the kernel vector $\mathds{1}$.

A matrix $\mathbf{U} \in \mathbb{K}^{n \times n}$ is orthonormal, if \[\mathbf{U}\mathbf{U}^*=\mathbf{U}^*\mathbf{U}=\mathbf{E}_n.\]
For an orthonormal matrix $\mathbf{U} \in \mathbb{K}^{n \times n}$ and a vector $f \in \K^n$ we denote
\[
\hat{f}:=\mathbf{U}^* f.
\]
If $\mathbf{U}$ is the eigenvector matrix of the Laplacian matrix, $\hat{f}$ is called the Graph Fourier transform, with the inverse transformation
\[
f=\mathbf{U}\hat{f}.
\]
We say a vector $f \in \mathbb{K}^n$ is $s$-sparse, if the size of its support equals $s$. We denote the size of the support of a vector $f$ also by
\[
\|f\|_0:=\#\{i \in [n] : f(i) \neq 0\}.
\]

\section{Minors and Galois groups}
Let $\mathbf{M} \in \mathbb{K}^{n \times n}$ be a diagonalizable matrix, let $\varphi_\mathbf{M}$ be its characteristic polynomial with splitting field $\mathbb{L}$. We denote the eigenvalues and eigenvectors by $\lambda_1,\ldots,\lambda_n \in \mathbb{L}$ and $u_1,\ldots,u_n \in \mathbb{L}^n$. The eigenvector matrix $\mathbf{U}$ is the matrix with columns $u_i$. For any two conjugate elements $\lambda_i, \lambda_j$ there exists $g \in \Gal (\varphi_\mathbf{M})$ such that $g(\lambda_i)=\lambda_j$. Applying this $g$ to the eigenvector equation $\mathbf{M}u_i=\lambda_i u_i$ we get 

\begin{align}\label{EVeq}
    g(\mathbf{M}) g(u_i) = \lambda_j g(u_i).
\end{align}

Since $\mathbf{M} \in \mathbb{K}^{n \times n}$ and $g_{|\mathbb{K}}$ is the identity, we have $g(\mathbf{M})=\mathbf{M}$ and hence the equation \eqref{EVeq} tells us $g(u_i)=u_j$. We denote by $u_i^J$ the restriction of the $i$-th eigenvector to the rows with indices in $J$. We are now ready to prove Theorem \ref{MainTheorem1}.
\begin{proof}[Proof of Theorem \ref{MainTheorem1}]
Assume there are sets $S,W \in \binom{[n]}{m}$ for some $m \in [n]$ such that $\det(\mathbf{U}_{W,S})=0$. Let $a_1,\ldots,a_m \in \mathbb{L}$ be the coefficients such that
\[
\sum_{i \in S}a_i u_i^W=0^W.
\]
Since $\Gal(\varphi_\mathbf{M})$ is $m$-homogeneous, there exists $g \in \Gal(\varphi_M)$ with $g(\{\l_i | i \in S\})=\{\l_i | i \in S'\}$ for all $S' \in \binom{[n]}{m}$. This yields
\[
\sum_{i \in S'}g(a_i)u_i^W=0^W
\]
for all $S' \in \binom{[n]}{m}$ and hence $\operatorname{rank}(\mathbf{U}_{W,[n]})=m-1$, a contradiction to the rows of $\mathbf{U}$ being linearly independent.
\end{proof}

Eberhard\cite[Theorem 1.3]{Eberhard} proved that, conditionally on the extended Riemann hypothesis, the Galois group of the characteristic polynomial of a random matrix is almost surely $A_n$ or $S_n$ and hence Corollary \ref{maincorollary} follows. Under the same condition Ferber, Jain, Sah and Sawhney \cite[Theorem 1.1]{FerberJain} could show that the characteristic polynomial of a random symmetrix matrix is almost surely irreducible. Thus Corollary \ref{corollaryadjacency} follows.

Our condition $\operatorname{Gal}(\varphi_\mathbf{M})\geq A_n$ is slightly stronger than just the $m$-homogeneity. Kantor\cite{Kantor}[Theorem 1] was able to show that only for $m=2,3,4$ there are groups that are $m$-homogeneous, but not $m$-transitive. Additionally, by the classification of finite simple groups, it was proven that for $m \geq 6$ there is no $m$-transitive group other than $A_n$ or $S_n$\cite{Cameron}. The most precise way to state the connection is the following.

\begin{theorem}
    Let $\mathbf{U}$ be the eigenvector matrix of a diagonalizable matrix $\mathbf{M}$ with characteristic polynomial $\varphi_\mathbf{M}$. If the Galois group $\Gal(\varphi_\mathbf{M})$ is m-homogeneous, then all minors of size $m$ and $(n-m)$ do not vanish. In particular, if the group $\Gal(\varphi_\mathbf{M})$ is transitive, i.e. the polynomial $\varphi_\mathbf{M}$ is irreducible, the matrix $\mathbf{U}$ does not contain a zero.
\end{theorem}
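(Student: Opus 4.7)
The plan is to observe that the proof of Theorem \ref{MainTheorem1} already uses the hypothesis $\Gal(\varphi_\mathbf{M})\geq A_n$ only through its $m$-homogeneity for the specific $m$ corresponding to the minor size under consideration, and then to reduce the case of $(n-m)$-minors to the case of $m$-minors by complementation.

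First, I would run the proof of Theorem \ref{MainTheorem1} verbatim: suppose for contradiction that $\det(\mathbf{U}_{W,S})=0$ for some $W,S\in\binom{[n]}{m}$, pick coefficients $a_i\in\mathbb{L}$, $i\in S$, not all zero, with $\sum_{i\in S}a_iu_i^W=0^W$, and then use $m$-homogeneity to choose, for every other $S'\in\binom{[n]}{m}$, an element $g\in\Gal(\varphi_\mathbf{M})$ with $g(\{\lambda_i\mid i\in S\})=\{\lambda_i\mid i\in S'\}$. Applying $g$ to the dependence relation and using $g(u_i)=u_{g(i)}$ transports it to $\sum_{i\in S'}g(a_i)u_i^W=0^W$. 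Since this holds for every $m$-subset $S'$, the columns $u_i^W$, $i\in[n]$, span a space of dimension at most $m-1$, so $\operatorname{rank}(\mathbf{U}_{W,[n]})\leq m-1<m=|W|$, contradicting invertibility of $\mathbf{U}$.

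Next, for minors of size $n-m$, I would use the elementary fact that $m$-homogeneity and $(n-m)$-homogeneity are equivalent: if $G\leq S_n$ is $m$-homogeneous and $T,T'\in\binom{[n]}{n-m}$, take $g\in G$ mapping $[n]\setminus T$ to $[n]\setminus T'$; then $g(T)=T'$. Hence the Galois group is automatically $(n-m)$-homogeneous, and the argument of the previous paragraph applied with $m$ replaced by $n-m$ shows that all $(n-m)\times(n-m)$ minors are nonzero.

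Finally, for the "in particular" clause, $1$-homogeneity coincides with transitivity of the action on $\{\lambda_1,\dots,\lambda_n\}$, which in turn is equivalent to irreducibility of $\varphi_\mathbf{M}$ over $\mathbb{K}$ (the standard fact that a Galois group acts transitively on the roots of an irreducible polynomial, and only on these). The $m=1$ case of what was just proved then says that every $1\times 1$ minor, i.e.\ every entry of $\mathbf{U}$, is nonzero. No step is really an obstacle here; the only substantive observation is that $m$-homogeneity is closed under complementation, after which the proof of Theorem \ref{MainTheorem1} applies verbatim at each relevant size.
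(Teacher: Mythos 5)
Your proof is correct, but you handle the $(n-m)$-minors differently from the paper. For the size-$m$ minors both arguments are identical: rerun the proof of Theorem \ref{MainTheorem1}, noting it only ever invokes $m$-homogeneity. For the size-$(n-m)$ minors, the paper instead combines that argument with Lemma \ref{complement}: a vanishing $(n-m)$-minor $\mathbf{U}_{W,S}$ forces, by orthogonality of the columns, a vanishing $m$-minor $\mathbf{U}_{[n]\setminus W,\,[n]\setminus S}$, which the first part already rules out. You replace this linear-algebraic complementation by a group-theoretic one: a permutation sending $[n]\setminus T$ to $[n]\setminus T'$ also sends $T$ to $T'$, so $m$-homogeneity is equivalent to $(n-m)$-homogeneity and the main argument applies verbatim at size $n-m$. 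Your route is arguably preferable in this generality: Lemma \ref{complement} needs $\mathbf{U}$ to have orthogonal columns, which holds for the symmetric (adjacency/Laplacian) matrices the paper mostly cares about but is not guaranteed for an arbitrary diagonalizable $\mathbf{M}$ as in the statement, whereas your complementation argument makes no such assumption. What the paper's lemma buys in exchange is reusability: it is needed anyway for the Laplacian case (Theorem \ref{maintheorem}), where one column is pinned to $\mathds{1}$ and the Galois group only acts on the remaining $n-1$ indices, a situation your purely group-theoretic trick does not cover. The ``in particular'' clause is handled the same way in both.
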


The proof relies on the proof of Theorem \ref{MainTheorem1} in connection with the following lemma. This lemma will also be useful for the eigenvector matrix of the Laplacian matrix, that requires some special treatment.

\begin{lemma}\label{complement}
    Let $\mathbf{U}$ be the a matrix with orthogonal columns $u_i$. If there are sets $W,S \in \binom{[n]}{m}$ such that the vectors $(u_i^W)_{i \in S}$ are linearly dependent, then the vectors $(u_i^{[n]\setminus W})_{i \in [n] \setminus S}$ are also linearly dependent.
\end{lemma}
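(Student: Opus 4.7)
My plan is to rephrase both the hypothesis and the conclusion in terms of intersections of certain subspaces of $\mathbb{L}^n$ with coordinate subspaces, and then use the orthogonality of the columns of $\mathbf{U}$ together with a dimension count to pass from one to the other.

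First I would introduce $V := \operatorname{span}_{\mathbb{L}}(u_i : i \in S)$ and $V' := \operatorname{span}_{\mathbb{L}}(u_i : i \in [n]\setminus S)$, which have dimensions $m$ and $n-m$ since the $u_i$ are linearly independent. Because the columns of $\mathbf{U}$ are pairwise orthogonal and together span $\mathbb{L}^n$, $V$ and $V'$ are each other's orthogonal complements. For any $J \subseteq [n]$ I would write $E_J := \{v \in \mathbb{L}^n : v_i = 0 \text{ for all } i \notin J\}$ for the coordinate subspace supported on $J$; then $E_W$ and $E_{[n]\setminus W}$ are also orthogonal complements of each other.

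Next I would observe that the linear dependence of $(u_i^W)_{i \in S}$ is equivalent to the restriction map $V \to E_W$ having non-trivial kernel, i.e.\ to $V \cap E_{[n]\setminus W} \neq \{0\}$; symmetrically, the desired conclusion translates to $V' \cap E_W \neq \{0\}$. To pass between the two I would apply the standard duality $(A \cap B)^\perp = A^\perp + B^\perp$, which yields $(V \cap E_{[n]\setminus W})^\perp = V' + E_W$. The hypothesis says the left-hand side is a proper subspace, so $\dim(V' + E_W) \leq n-1$, and the Grassmann formula then forces
\[
\dim(V' \cap E_W) \geq (n-m) + m - (n-1) = 1,
\]
producing the required nonzero vector in $V' \cap E_W$.

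The step I expect to require the most care is making the orthogonality machinery work uniformly over the splitting field $\mathbb{L}$ rather than just over $\mathbb{R}$; I plan to handle this by fixing an embedding $\mathbb{L} \hookrightarrow \mathbb{C}$ and using the standard Hermitian inner product on $\mathbb{C}^n$, which is non-degenerate on every subspace and so justifies both $V^\perp = V'$ and the duality identity used above. Once that is in place, the remainder is a short dimension count and no further input is needed.
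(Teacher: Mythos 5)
Your proof is correct and is essentially the paper's argument in dual, coordinate-free form: the nonzero vector you extract from $V \cap E_{[n]\setminus W}$ is exactly the paper's $\sum_{i \in S} a_i u_i$, and your Grassmann dimension count repackages the paper's observation that the $n-m$ vectors $u_j^{[n]\setminus W}$, $j \notin S$, all lie in the hyperplane orthogonal to the restriction of that vector to $[n]\setminus W$. The only point worth making explicit (which the paper also uses silently) is that the orthogonal columns are nonzero, hence linearly independent, so that $\dim V = m$, $\dim V' = n-m$ and $V' = V^{\perp}$; with that granted, your non-degeneracy bookkeeping over $\mathbb{L} \hookrightarrow \mathbb{C}$ goes through.
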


\begin{proof}
    Let $a_i \in \R$ be the coefficients such that
    \[\sum_{i \in S}a_iu_i^W=0^W.\]
    For $j \notin S$ we have $\langle u_j , u_i\rangle=0$ for all $i \in S$ and it follows that
    \begin{align*}
    0&=\langle u_j,\sum_{i \in S}a_i u_i \rangle\\
    &=\langle u_j^W , \sum_{i \in S}a_i u_i^W \rangle + \langle u_j^{[n] \setminus W}, \sum_{i \in S} a_i u_i^{[n]\setminus W} \rangle \\
    &=0 + \langle u_j^{[n] \setminus W}, \sum_{i \in S} a_i u_i^{[n] \setminus W} \rangle.
    \end{align*}
    Since the vectors $u_i$ are linearly independent we know that $\sum_{i \in S}a_iu_i\neq 0^{[n]}$ and hence
    \[\sum_{i \in S} a_i u_i^{[n] \setminus W} \neq 0^{[n] \setminus W}.\]
    Now the $(n-m)$ vectors $u_j^{[n] \setminus W}$ of length $(n-m)$ lie in the hyperplane orthogonal to $\sum_{i \in S} a_i u_i^{[n] \setminus W}$ and therefore are linearly dependent.
\end{proof}

\begin{figure}[H]
\centering
\begin{tikzpicture}[scale=1]
    \filldraw[black] (0,0) circle (1.5pt) node[anchor=south] {3};
    \filldraw[black] (1,0) circle (1.5pt) node[anchor=south] {5};
    \filldraw[black] (2,0) circle (1.5pt) node[anchor=south] {7};
    \filldraw[black] (3,0) circle (1.5pt) node[anchor=south] {8};
    \filldraw[black] (0,-1) circle (1.5pt) node[anchor=north] {4};
    \filldraw[black] (1,-1) circle (1.5pt) node[anchor=north] {6};
    \filldraw[black] (-1,-1) circle (1.5pt) node[anchor=north] {2};
    \filldraw[black] (-1,0) circle (1.5pt) node[anchor=south] {1};
    \draw (0,0) -- (1,0);
    \draw (1,0) -- (2,0);
    \draw (2,0) -- (3,0);
    \draw (0,0) -- (0,-1);
    \draw (1,0) -- (1,-1);
    \draw (0,-1) -- (1,-1);
    \draw (0,-1) -- (-1,-1);
    \draw (0,0) -- (-1,0);
    \draw (-1,-1) -- (-1,0);
    \draw (0,-1) -- (-1,0);
\end{tikzpicture}
\caption{A graph with 1-homogeneous Galois group $\Gal (\varphi_\mathbf{A})<A_{8}$, but vanishing minors.}
\label{imprimitivezero}
\end{figure}
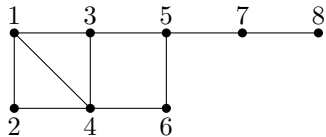
The characteristic polynomial of the adjacency matrix of the graph in figure \ref{imprimitivezero} has the characteristic polynomial

\[
\varphi_\mathbf{A}=\l^8-10\l^6-4\l^5+23\l^4+12*\l^3-12\l^2-4\l+1,
\]
that is irreducible, but with the Galois group

\[
\Gal(\varphi_\mathbf{A}) \cong \left\{\sigma \in S_8 \colon \sigma(\{1,2,3,4\}) \in \{\{1,2,3,4\},\{5,6,7,8\}\}\right\}.
\]

The problem of vanishing minors originally appeared in  the analysis of an algorithm \cite{SubspaceEJK}, that recovers sparse signals on graphs, whose success relies on the regularity of the matrix
\[
\mathbf{G}_{v,j}=f_v(j)
\]
with $f_v(j)=\sum_{i \in S}\lambda_i^ju_i(v)$ for $0 \leq j \leq s-1$ and $v \in W \subseteq [n]$. The factorization $\mathbf{G}=\mathbf{U}_{W,S}\cdot \mathbf{V}(\lambda_i : i \in S)$ yields the connection to minors. A slighty improved version of this algorithm relies on the regularity of the following matrix
\[
\tilde{\mathbf{G}}=
\begin{pmatrix}
        f_1(0) & f_1(1) & \ldots & f_1(s-1) \\
        f_1(1) & f_1(2) & \ldots & f_1(s) \\
        \vdots & \vdots & & \vdots \\
        f_1(r_1-1) & f_1(r_1) & \ldots & f_1(s-2+r_1) \\
        f_2(0) & f_2(1) & \ldots & f_2(s-1) \\
        f_2(1) & f_2(2) & \ldots & f_2(s) \\
        \vdots & \vdots & & \vdots \\
        f_k(r_k-1) & f_k(r_k) & \ldots & f_k(s-2+r_k) 
    \end{pmatrix}
\]
for some radii $\sum_{i=1}^kr_i=m$ and corresponding vertices $W=\{v_1,\ldots,v_k\}$. This matrix has the factorization
\[
\tilde{\mathbf{G}}=
\underbrace{
\begin{pmatrix}
    u_{j_1}(v_1)&u_{j_2}(v_1)&\ldots&u_{j_s}(v_1)\\
    \l_{j_1}u_{j_1}(v_1)&\l_{j_2}(v_1)&\ldots&l_{j_s}u_{j_s}(v_1)\\
    \vdots&\vdots& & \vdots\\
    \l_{j_1}^{r_1-1}u_{j_1}(v_2)&\l_{j_2}^{r_1-1}u_{j_2}(v_2)&\ldots&\l_{j_s}^{r_1-1}u_{j_s}(v_2)\\
    u_{j_1}(v_2)&u_{j_2}(v_2)&\ldots&u_{j_s}(v_2)\\
    \l_{j_1}u_{j_1}(v_2)&\l_{j_2}(v_2)&\ldots&l_{j_s}u_{j_s}(v_2)\\
    \vdots&\vdots& & \vdots\\
    \l_{j_1}^{r_k-1}u_{j_1}(v_k)&\l_{j_2}^{r_k-1}u_{j_2}(v_k)&\ldots&\l_{j_s}^{r_k-1}u_{j_s}(v_k)
\end{pmatrix}}_{:=\mathbf{G}(S,W,r_1,\ldots,r_k)}
\cdot
\mathbf{V}(\lambda_i : i \in S),
\]
for $W=\{v_i:1\leq i \leq k\}$ and $S=\{j_i : 1 \leq i \leq s\}$. Note that the rows of each vertex-block are linearly independent as soon as $u_j(v)\neq0$ for $j \in S$. The next matrix is a small example of such a matrix.
\[\mathbf{G}\left(\{1,2\},\{v,w\},2,2\right)=
\begin{pmatrix}
u_1(v)&u_2(v)&u_3(v)&u_4(v)\\
\lambda_1u_1(v)&\lambda_2u_2(v)&\lambda_3u_3(v)&\lambda_4u_4(v)\\
u_1(w)&u_2(w)&u_3(w)&u_4(w)\\
\lambda_1u_1(w)&\lambda_2u_2(w)&\lambda_3u_3(w)&\lambda_4u_4(w)
\end{pmatrix}.
\]
\begin{theorem}\label{radii}
    Let $\mathbf{U}$ be the eigenvector matrix of $\mathbf{A}$. The matrix $\mathbf{G}(S,W,r_1,\ldots,r_k)$ has full rank, if $\Gal(\varphi_\mathbf{A})\geq A_n$ and  $d(v_i,v_j)>r_i+r_j$ for all $1 \leq i,j \leq k$.
\end{theorem}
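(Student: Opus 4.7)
The strategy combines the Galois-theoretic argument from the proof of Theorem \ref{MainTheorem1} with a Krylov-space computation that exploits the graph distance hypothesis; the goal is to show that $\mathbf{G}:=\mathbf{G}(S,W,r_1,\ldots,r_k)$ has full column rank $s$ (assuming $m:=\sum_i r_i\geq s$, the relevant regime for signal recovery). The first step is to reformulate the matrix: using $\mathbf{A}\mathbf{U}=\mathbf{U}\mathbf{D}$ with $\mathbf{D}=\operatorname{diag}(\lambda_1,\ldots,\lambda_n)$, the $(v_i,\ell)$-row of $\mathbf{G}$ is $(e_{v_i}^T\mathbf{A}^\ell\mathbf{U})_{j\in S}$. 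Let $\mathbf{H}\in\mathbb{L}^{m\times n}$ be the analogous matrix with $S=[n]$, so $\mathbf{G}=\mathbf{H}_{:,S}$; the factorization $\mathbf{H}=\mathbf{K}\mathbf{U}$, where $\mathbf{K}_{(v_i,\ell),:}=e_{v_i}^T\mathbf{A}^\ell$, yields $\operatorname{rank}(\mathbf{H})=\operatorname{rank}(\mathbf{K})$ by invertibility of $\mathbf{U}$.

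Next I would establish $\operatorname{rank}(\mathbf{K})=m$. The row $e_{v_i}^T\mathbf{A}^\ell$ is supported on $B(v_i,\ell)\subseteq B(v_i,r_i-1)$, and the hypothesis $d(v_i,v_j)>r_i+r_j$ for $i\neq j$ makes these balls pairwise disjoint, so inter-block independence is automatic. For the intra-block case, a dependence $\sum_\ell\alpha_\ell e_v^T\mathbf{A}^\ell=0$ expanded through the eigendecomposition can be rewritten as $e_v^T\mathbf{U}\operatorname{diag}(p(\lambda_j))\mathbf{U}^{-1}=0$ with $p(x)=\sum_\ell\alpha_\ell x^\ell$ of degree $<r_i\leq n$, and this forces $u_j(v)p(\lambda_j)=0$ for every $j$. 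By Theorem \ref{MainTheorem1} applied to the size-$1$ minors, $u_j(v)\neq 0$ for all $j$, so $p$ vanishes at $n$ distinct eigenvalues and must be identically zero.

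Finally I would run the Galois argument from the proof of Theorem \ref{MainTheorem1}. If $\operatorname{rank}(\mathbf{G})<s$, then $\mathbf{H}_{:,S}$ admits a non-trivial column relation $\sum_{j\in S}a_j\mathbf{H}_{:,j}=0$. Any $g\in\Gal(\varphi_\mathbf{A})$ fixes $\mathbf{A}$ and satisfies $g(\lambda_j)=\lambda_{g(j)}$ and $g(u_j)=u_{g(j)}$, hence $g(\mathbf{H}_{:,j})=\mathbf{H}_{:,g(j)}$. The $s$-homogeneity of $\Gal(\varphi_\mathbf{A})\geq A_n$ then transports this relation to every $s$-subset of columns of $\mathbf{H}$, forcing $\operatorname{rank}(\mathbf{H})<s\leq m=\operatorname{rank}(\mathbf{K})$, a contradiction.

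I expect the main obstacle to be the intra-block Krylov statement, since it demands not only independence of the $r_i$ powers of $\mathbf{A}$ applied to $e_v$ but the non-triviality of every eigenbasis coordinate of $e_v$; this is precisely where the bridge back to the nonvanishing of entries of $\mathbf{U}$ (Theorem \ref{MainTheorem1} at size $1$) becomes essential. The distance hypothesis $d(v_i,v_j)>r_i+r_j$ is in fact slightly stronger than strictly necessary (one checks $d(v_i,v_j)>r_i+r_j-2$ suffices for the ball disjointness), but it provides a clean symmetric statement.
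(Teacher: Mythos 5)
Your proposal is correct and follows essentially the same route as the paper's proof: spread a column dependence over all $s$-subsets via the $s$-homogeneity of $\Gal(\varphi_\mathbf{A})$, decouple the vertex blocks using the distance hypothesis, and get intra-block independence from the nonvanishing entries $u_j(v)\neq 0$ together with the distinctness of the eigenvalues. Your factorization $\mathbf{H}=\mathbf{K}\mathbf{U}$ with the disjoint-ball argument for $\mathbf{K}$ is just a repackaging of the paper's computation $\langle Z_{v,m},Z_{w,l}\rangle=(\mathbf{A}^{m+l})_{v,w}=0$, and your observation that $d(v_i,v_j)>r_i+r_j-2$ already suffices is accurate.
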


\begin{proof}
    Assume $\operatorname{rank}(\mathbf{G}(S,W,r_1,\ldots,r_k))<s$. Then there are coefficients $a_i$ such that
    \[
        \sum_{i \in S}a_i\l_i^{k_i}u_i(v)=0
    \]
    for $v \in W$ and $0 \leq k_i \leq r_k-1$. Applying each $\gamma \in \operatorname{Gal}(\varphi_A)$ yields $\operatorname{rank}(\mathbf{G}([n],W,r_1,\ldots,r_k))<s$. Let $Z_{v,m}$ be the row of $\mathbf{G}([n],W,r_1,\ldots,r_k)$ belonging to vertex $v$ and power $m$. We have
    \begin{align*}
        \langle Z_{v,m},Z_{w,l} \rangle &= \sum_{i=1}^n\l_i^{m+l}u_i(v)u_i(w)\\
        &=\mathbf{A}_{v,w}^{m+l}.
    \end{align*}
    And hence we know that $\langle Z_{v,m},Z_{w,l} \rangle=0$ for $d(v,w)>m+l$. This yields that the intersection of the span of the blocks is $0$, if the distance condition is fulfilled. Again, if $\Gal(\varphi_\mathbf{A})\geq A_n$, then the blocks have full rank and hence the matrix $\mathbf{G}([n],W,r_1,\ldots,r_k)$ can not be singular, a contradiction.
\end{proof}

\section{Consequences of nonvanishing minors and applications}
There are several consequences of the minors of the eigenvector matrix being nonzero. The first one is an uncertainty principle similar to the one from Tao.
\begin{theorem}\label{minoruncertainty}{\cite[Theorem~1.1]{Tao}}
    Let $\mathbf{U} \in \mathbb{K}^{n \times n}$ be a matrix. All minors of any size of $\mathbf{U}$ are nonzero if and only if \[\|f\|_0+\|\hat{f}\|_0\geq n+1.\]
\end{theorem}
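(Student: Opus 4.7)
The plan is to prove both directions by translating between ``a nonzero vector $f$ with small $\|f\|_0 + \|\hat{f}\|_0$'' and ``a vanishing square submatrix of $\mathbf{U}$.'' Since a $(W,S)$-minor of $\mathbf{U}$ vanishes iff the corresponding $(S,W)$-minor of $\mathbf{U}^*$ does (one determinant is the conjugate of the other), I freely transfer between the two and work directly with the map $f \mapsto \hat{f} = \mathbf{U}^* f$.

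For the forward direction, I would assume every minor of $\mathbf{U}$ is nonzero; in particular $\mathbf{U}$ is invertible. Suppose for contradiction that some nonzero $f$ satisfies $\|f\|_0 + \|\hat{f}\|_0 \leq n$, and set $S = \operatorname{supp}(f)$, $T = \operatorname{supp}(\hat{f})$, $s = |S|$, $t = |T|$. Evaluating the identity $\hat{f}_i = \sum_{j \in S} \mathbf{U}^*_{i,j} f_j$ at indices $i \in [n] \setminus T$ shows that the nonzero vector $(f_j)_{j \in S}$ lies in the kernel of the $(n-t) \times s$ submatrix $\mathbf{U}^*_{[n]\setminus T, S}$. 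Since $s + t \leq n$ forces $n - t \geq s$, restricting this column dependence to any choice of $s$ rows from $[n] \setminus T$ produces an $s \times s$ submatrix of $\mathbf{U}^*$ with vanishing determinant, contradicting the minor hypothesis.

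For the converse, I would assume the uncertainty principle holds and suppose $\det(\mathbf{U}^*_{W,S}) = 0$ for some $W, S \in \binom{[n]}{m}$. The column dependence of $\mathbf{U}^*_{W,S}$ supplies a nonzero vector $(a_j)_{j \in S}$ with $\sum_{j \in S} \mathbf{U}^*_{i,j} a_j = 0$ for every $i \in W$. Extending by zero to $f \in \mathbb{K}^n$ gives $\|f\|_0 \leq m$ and $\hat{f}_i = 0$ for $i \in W$, hence $\|\hat{f}\|_0 \leq n - m$, so $\|f\|_0 + \|\hat{f}\|_0 \leq n$, the desired contradiction.

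The entire argument is essentially combinatorial bookkeeping with no substantive obstacle; the only point requiring a moment of care is the degenerate case $m = n$, where the vanishing ``minor'' is the full determinant, which is handled by simply taking $f$ to be any nonzero element of $\ker \mathbf{U}^*$.
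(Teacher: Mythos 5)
Your proof is correct and follows essentially the same route as the paper's: both directions use the standard dictionary between a vanishing $(W,S)$-minor and a nonzero vector supported on $S$ whose transform vanishes on $W$. The one difference worth noting is in the direction ``uncertainty principle $\Rightarrow$ nonvanishing minors'': the paper argues via the inversion $f=\mathbf{U}\hat f$ (implicitly using that $\mathbf{U}$ is unitary and that $\|f\|_0+\|\hat f\|_0$ equals $n$ exactly), whereas you read the column dependence directly off $\hat f=\mathbf{U}^*f$ and then select $s$ rows from the rectangular block $\mathbf{U}^*_{[n]\setminus T,S}$, which makes your argument valid for an arbitrary matrix $\mathbf{U}$ --- the generality in which the theorem is actually stated.
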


\begin{proof}
    Assume some minor of $\mathbf{U}$ is zero. Then there are coefficients $a_i \in \mathbb{K}$ such that $\sum_{i \in I}a_iu_i^J=0^J$ for $\#I=\#J=m$. The vector
    \[
    f(i)=\begin{cases}a_i\ \text{ for } i \in I,\\
    0\ \ \text{  else.}
    \end{cases}
    \]
    has support of size $m$. This choice yields $(\mathbf{U}f)^J=0^J$ and hence
    \[\|f\|_0+\|\mathbf{U}f\|_0\leq m + (n-m)=n.
    \]
    Assume $\|f\|_0+\|\hat{f}\|_0=n$ and denote $\|f\|_0=r$. Let $W=\operatorname{supp}(f)$ be the support of $f$ and $S=\operatorname{supp}(\hat{f})$ be the support of $\hat{f}$ with $\#S=n-r$. Restricting $\mathbf{U}$ to the columns belonging to $S$ yields
    \[
    \mathbf{U}_{[n],S} \hat{f}_S=f,
    \]
    which means that the minor $\mathbf{U}_{[n]\setminus W,S}$ is zero, a contradiction.
\end{proof}

Another well known consequence is the following application to sparse recovery.
\begin{theorem}{\cite[Theorem~2.13]{FoRa13}}
    Let $G$ be a graph with vertex set $[n]$ and $\mathbf{M}=\mathbf{U}\Lambda \mathbf{U}^*$ for $\mathbf{M} \in \{\mathbf{L},\mathbf{A}\}$ with nonvanishing minors of $\mathbf{U}$ of any size. For any $1 \leq s \leq n$ pick any $W,S \in \binom{[n]}{s}$, then each $s$-sparse signal
    \[f=\sum_{w \in W}a_w\delta_w\]
    is uniquely determined by at least $2s$ samples of the form \[\langle u_i, f \rangle.\]
\end{theorem}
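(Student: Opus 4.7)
The plan is to derive the theorem as an immediate consequence of Theorem \ref{minoruncertainty}. Uniqueness of an $s$-sparse signal from $2s$ samples is equivalent to showing that no nonzero $(\le 2s)$-sparse vector can have vanishing samples on the chosen index set, so the whole argument reduces to a sparsity count plus a single invocation of the uncertainty principle.

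First I would proceed by contradiction. Suppose two distinct $s$-sparse signals $f_1, f_2$ produce the same values $\langle u_i, f\rangle$ for every $i$ in an index set $T$ with $|T|\ge 2s$. Setting $h := f_1-f_2$, the union $\operatorname{supp}(f_1)\cup\operatorname{supp}(f_2)$ has size at most $2s$, hence $\|h\|_0\le 2s$. On the transform side, $(\hat h)_i=\langle u_i,h\rangle=0$ for all $i\in T$, so $\|\hat h\|_0\le n-2s$. Adding these two bounds gives $\|h\|_0+\|\hat h\|_0\le n$, which contradicts Theorem \ref{minoruncertainty} unless $h=0$, and therefore $f_1=f_2$.

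An equivalent and more explicit route, not using the uncertainty principle, is the following. Let $H:=\operatorname{supp}(h)$ and choose any subset $S'\subseteq T$ with $|S'|=|H|\le 2s$. The constraints $\langle u_i,h\rangle=0$ for $i\in S'$ form a square linear system whose coefficient matrix is the conjugate transpose of the minor $\mathbf{U}_{H,S'}$; its determinant is the complex conjugate of $\det(\mathbf{U}_{H,S'})$, which is nonzero by hypothesis. Hence the system is invertible, $h_H=0$, and so $h=0$. This viewpoint makes transparent that "nonvanishing minors of every size up to $2s$" is exactly the spark-style condition forcing $s$-sparse uniqueness from $2s$ linear measurements.

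The argument is quite short and I do not anticipate a genuine obstacle. The only delicate point is the interpretation of the stated index sets: I read the "$2s$ samples" as the choice of a sampling set of cardinality $2s$ in the spectral index $i$, which is the regime the nonvanishing-minor hypothesis is tailored for. The nontrivial content of the theorem has already been packed into Theorem \ref{minoruncertainty}, so writing this proof is essentially bookkeeping.
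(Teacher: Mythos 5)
Your proof is correct. The paper itself gives no proof of this statement — it is quoted as a known result from \cite[Theorem~2.13]{FoRa13} — so there is no in-paper argument to compare against; both of your routes (the difference-vector count $\|h\|_0+\|\hat h\|_0\le 2s+(n-2s)=n$ contradicting Theorem \ref{minoruncertainty}, and the direct inversion of the square minor $\mathbf{U}_{H,S'}$) are the standard derivations and are exactly the intended link between nonvanishing minors and $s$-sparse uniqueness from $2s$ spectral samples. Your reading of the somewhat garbled statement (the set $S$ of size $s$ plays no real role; the $2s$ samples are a spectral index set of cardinality at least $2s$) is the sensible one and is consistent with the cited source.
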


Let $\mathbf{U}$ be the eigenvector matrix of $\mathbf{M}  \in \{\mathbf{L},\mathbf{A}\}$ and let $0\neq x \in \R^n$ be a vector. One interesting object to study is the so called walk matrix, see e.g. \cite{Cvetkovi}\cite{Godsil}\cite{Liu}\cite{Rowlinson},
\[
\mathbf{W}(x):=
\begin{pmatrix}
    \vert & \vert & \vert & \vert & \vert\\
    x & \mathbf{M} x & \mathbf{M}^2 x & \cdots & \mathbf{M}^{n-1}x\\
    \vert & \vert & \vert & \vert & \vert
\end{pmatrix}
\]
and its rank, depending on $x$. Let $0\leq r \leq n-1$ be the minimal number such that there are coefficients $a_0,\ldots,a_r$ with
\[\sum_{k=0}^ra_k\mathbf{M}^kx=0^{[n]}.\]
Then $r$ is the degree of the polynomial $Q(\l)=\sum_{k=0}^ra_k\l^k$ and also the rank of $\mathbf{W}$, because each set of columns with indices in $\{i,\ldots,i+r\}$ will have the kernel \[(a_0,\ldots,a_r)^T.\]
The next theorem describes the connection between the choice of $x$, the rank of the walk matrix and the nonvanishing of minors of $\mathbf{U}$.
\begin{theorem}\label{WalkAdjacency}
    Let $G$ be a graph on $n$ vertices, $0 \neq x\in \R^n$ and let $\mathbf{W}(x)$ be the walk matrix for $\mathbf{M}=\mathbf{A}$.
    \begin{enumerate}
        \item If $\varphi_\mathbf{A}$ is irreducible and $x \in \Q^n$, then
        \[\operatorname{rank}(\mathbf{W}(x))=n.\]
        \item For more general $0 \neq x \in \R^n$ we have
        \[\operatorname{rank}(\mathbf{W}(x))=\|\mathbf{U}^*x\|_0=\|\hat{x}\|_0.\]
        \item If all minors of $\mathbf{U}$ are nonvanishing, the following formula holds
        \[\operatorname{rank}(\mathbf{W}(x))\geq n+1-\|x\|_0.\]
    \end{enumerate}
\end{theorem}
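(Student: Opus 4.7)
Proof plan for Theorem \ref{WalkAdjacency}. The three parts all rest on a single spectral factorization of the walk matrix, and I will address them in the order (2), (3), (1). Diagonalizing $\mathbf{A}=\mathbf{U}\Lambda\mathbf{U}^*$ and writing $\mathbf{A}^k x = \mathbf{U}\Lambda^k\hat{x}$ componentwise yields
\[
\mathbf{W}(x)=\mathbf{U}\cdot\operatorname{diag}(\hat{x})\cdot\mathbf{V}(\lambda_1,\ldots,\lambda_n).
\]
Since $\mathbf{U}$ is orthogonal and the Vandermonde factor is invertible whenever the eigenvalues of $\mathbf{A}$ are pairwise distinct, the rank of $\mathbf{W}(x)$ equals the rank of $\operatorname{diag}(\hat{x})$, namely $\|\hat{x}\|_0$; this proves (2). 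Part (3) is then immediate from Theorem \ref{minoruncertainty}: nonvanishing of all minors of $\mathbf{U}$ gives $\|x\|_0+\|\hat{x}\|_0\geq n+1$, and substituting the identity $\operatorname{rank}(\mathbf{W}(x))=\|\hat{x}\|_0$ from (2) finishes the argument.

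For (1), I use the interpretation stated just before the theorem: $\operatorname{rank}(\mathbf{W}(x))$ is the degree of the polynomial $Q$ of smallest degree annihilating $x$ under $\mathbf{A}$. Since $\mathbf{A}$ has integer entries and $x\in\Q^n$, every column of $\mathbf{W}(x)$ lies in $\Q^n$, so any linear dependence over $\R$ descends to one over $\Q$; in particular $Q$ may be chosen in $\Q[\lambda]$. Because $Q$ annihilates $x$, it divides the minimal polynomial of $\mathbf{A}$ and therefore divides $\varphi_\mathbf{A}$. Irreducibility of $\varphi_\mathbf{A}$ over $\Q$, together with the fact that $x\neq 0$ forces $Q$ to be nonconstant, yields that $Q$ is a scalar multiple of $\varphi_\mathbf{A}$, so $\deg Q=n$.

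The only subtle point is the rational descent in (1), i.e.\ the statement that the $\R$-kernel of a rational matrix has a $\Q$-basis; this is standard linear algebra and is the only place where rationality of $x$ (and integrality of $\mathbf{A}$) is used. Once the Vandermonde-orthogonal factorization above is in hand, parts (2) and (3) follow essentially without further work.
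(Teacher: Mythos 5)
Your proof is correct and, for parts (1) and (3), essentially identical to the paper's: part (1) via the minimal annihilating polynomial $Q$ of $x$, rational descent, and divisibility of $\varphi_\mathbf{A}$ by $Q$; part (3) by combining part (2) with Theorem \ref{minoruncertainty}. For part (2) you take a genuinely cleaner route. The paper works componentwise: it expands $Q(\mathbf{A})x$ via $\mathbf{A}^k_{v,w}=\sum_l u_l(v)u_l(w)\lambda_l^k$ to show $Q(\mathbf{A})x=0$ iff $Q(\lambda_l)\hat{x}_l=0$ for all $l$, and then identifies $\operatorname{rank}(\mathbf{W}(x))$ with $\deg Q$ using the discussion preceding the theorem. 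Your three-factor decomposition $\mathbf{W}(x)=\mathbf{U}\operatorname{diag}(\hat{x})\mathbf{V}(\lambda_1,\ldots,\lambda_n)$ gets the same conclusion in one line and has two advantages: it makes explicit exactly where simplicity of the spectrum is used (invertibility of the Vandermonde factor --- a hypothesis the paper never states but implicitly needs, since with a repeated eigenvalue one can have $\operatorname{rank}(\mathbf{W}(x))<\|\hat{x}\|_0$), and it avoids the paper's own slip, where the proof twice asserts the formula $\operatorname{rank}(\mathbf{W}(x))=n-\|\mathbf{U}^*x\|_0$ in contradiction with the (correct) statement $\operatorname{rank}(\mathbf{W}(x))=\|\hat{x}\|_0$ of the theorem. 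One small remark: in part (1) you could shortcut the rational-descent discussion by noting that the ranks of $\mathbf{W}(x)$ over $\Q$ and over $\R$ coincide since $\mathbf{W}(x)$ has rational entries, which is all that is needed to take $Q\in\Q[\lambda]$.
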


\begin{proof}
In a first step we will show the following equation \[\operatorname{rank}(\mathbf{W}(x))=n-\|\mathbf{U}^*x\|_0.\]
Let $Q=\sum_{k=0}^ra_k\l^k$ be the polynomial with coefficients $a_k$, that are the kernel of $\mathbf{W}(x)$, i.e.,
\begin{align*}
Q(\mathbf{A})\cdot x=\sum_{k=0}^ra_k\mathbf{A}^kx=0^{[n]},
\end{align*}
which is equivalent to
\begin{align}\label{firstcondition}
\sum_{k=0}^ra_k \sum_{w=1}^n\mathbf{A}_{v,w}^kx_w=0 \text{ for all }v.
\end{align}
The diagonalization of $\mathbf{A}$ yields
\[ \mathbf{A}_{v,w}^k=\sum_{l=1}^nu_l(v)u_l(w)\lambda_l^k\]
and we want to use this equation to obtain a formula for $\left(Q(\mathbf{A})\cdot x\right)$ that gives more insight
\begin{align*}
    \sum_{k=0}^ra_k \sum_{w=1}^n\mathbf{A}_{v,w}^kx_w &= \sum_{k=0}^ra_k\sum_{w=1}^n\left(\sum_{l=1}^n u_l(v)u_l(w)\lambda_l^k\right)x_w\\
    &=\sum_{l=1}^nu_l(v)\sum_{w=1}^nx_wu_l(w)\sum_{k=0}^ra_k\lambda_l^k\\
    &=\sum_{l=1}^nu_l(v)\sum_{w=1}^nx_wu_l(w)Q(\l_l).
\end{align*}
We can see now that equation \eqref{firstcondition} is equivalent to
\[\sum_{l=1}^nu_l(v)Q(\lambda_l)\sum_{w=1}^nx_wu_l(w)=0 \text{ for all }v.\]
This condition reads as
\[\left\langle \left(u_l(v)\right)_{l=1,\ldots,n},\left(Q(\lambda_l)\sum_{w=1}^nx_wu_l(w)\right)_{l=1,\ldots,n}\right\rangle =0 \text{ for all }v\]
and since the vectors $(u_l(v))_{l=1,\ldots,n}$ form a basis 
this yields \[Q(\lambda_l)\sum_{w=1}^nx_wu_l(w)=0\] for all $l$. The second vector can be written as
 \[\left(\sum_{w=1}^nw_xu_l(w)\right)_{l=1,\ldots,n}=\mathbf{U}^*x\]
Thus we have
\[
\operatorname{rank}(\mathbf{W})(x)=n-\|\mathbf{U}^*x\|_0.
\]

Now for the first point, observe that $x \in \Q^n$ implies $a_k \in \Q$ and hence $a_k \in \Z$ and thus $Q$ divides  $\varphi_\mathbf{A}$ in the polynomial ring $\Q[X]$. If $\varphi_\mathbf{A}$ is irreducible, either $Q=\varphi_\mathbf{A}$ or $Q=1$. The case $Q=1$ is not possible for $x \neq 0$ and hence $\operatorname{rank}(\mathbf{W})=n$ for $\varphi_\mathbf{A}$ irreducible and $x \in \Q^n$.

For not necessarily rational $x \in \R^n$ we see that 
\[
\operatorname{rank}(\mathbf{W}(x))=\|\mathbf{U}^*x\|_0=\|\hat{x}\|_0.
\]
If we restrict the size of the support of $x$, i.e., $\|x\|_0=s$, nonvanishing minors and Theorem \ref{minoruncertainty} yield
\[
\operatorname{rank}(\mathbf{W}(x))\geq n+1-s.
\]
\end{proof}

\section{Laplacian matrices}

We now formulate the analog versions of the prior theorems for the Laplacian matrices with necessary changes. Let $\xi(\l)$ be the unique polynomial with $\varphi_L=\l \cdot \xi(\l)$.

\begin{theorem}\label{maintheorem}
    Let $G$ be a graph on $n$ vertices and $\mathbf{U}$ be the eigenvector matrix of the Laplacian matrix $\mathbf{L}$ with characteristic polynomial $\varphi_L=\l \cdot \xi$.  If $\operatorname{Gal}(\xi)\geq A_{n-1}$, then we have
    \[
    \operatorname{det}\left( \mathbf{U}_{W,S}  \right) \neq 0
    \]
    for all $W,S \in \binom{[n]}{m}$.
\end{theorem}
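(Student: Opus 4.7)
The plan is to adapt the proof of Theorem~\ref{MainTheorem1}, dealing carefully with the special eigenvector $u_1$ corresponding to the eigenvalue $0$, which every $g \in \Gal(\xi)$ fixes. I first observe that $\Gal(\xi) \geq A_{n-1}$ forces $\xi$ to be irreducible (at least for $n \geq 3$), so $0$ is a simple eigenvalue, $G$ is connected, and I may take the unnormalized eigenvector $u_1 = \mathds{1}$; rescaling columns of $\mathbf{U}$ only multiplies minors by nonzero scalars, so this does not affect whether they vanish. The induced action of $\Gal(\xi)$ on $\{2,\ldots,n\}$ is $k$-homogeneous for every $k \leq n-1$.

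To derive a contradiction from $\det(\mathbf{U}_{W,S}) = 0$ for some $W,S \in \binom{[n]}{m}$, I first use that the columns of $\mathbf{U}$ are orthogonal together with Lemma~\ref{complement} to obtain $\det(\mathbf{U}_{[n]\setminus W,[n]\setminus S}) = 0$, so after possibly replacing $(W,S)$ by its complement (and $m$ by $n - m$) I may assume $1 \in S$; the ingredient needed below is $(m-1)$-homogeneity, which is available as $m - 1 \leq n - 1$. Write a nontrivial dependence $a_1 u_1^W + \sum_{i \in S\setminus\{1\}} a_i u_i^W = 0^W$ with $a_i \in \mathbb{L}$. The main case split is on whether $a_1 = 0$. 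If $a_1=0$, the argument of Theorem~\ref{MainTheorem1} applied on $\{2,\ldots,n\}$ with $(m-1)$-homogeneity gives $\operatorname{rank}(\mathbf{U}_{W,\{2,\ldots,n\}})\leq m-2$, hence $\operatorname{rank}(\mathbf{U}_{W,[n]})\leq m-1$, contradicting the linear independence of the rows of $\mathbf{U}$. If $a_1\neq 0$, then $g(a_1) \neq 0$ for every $g \in \Gal(\xi)$, and applying $g$ (which fixes $u_1$ and permutes $u_2,\ldots,u_n$ up to nonzero scalars) together with $(m-1)$-homogeneity shows $u_1^W \in \operatorname{span}\{u_i^W : i \in T\}$ for \emph{every} $T \in \binom{\{2,\ldots,n\}}{m-1}$.

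To close the $a_1 \neq 0$ case I split on $r := \operatorname{rank}(\mathbf{U}_{W,\{2,\ldots,n\}})$. If $r \leq m-1$, then some $(m-1)$-subset $T$ already spans the column space, so $u_1^W$ lies in it and $\operatorname{rank}(\mathbf{U}_{W,[n]}) \leq m-1$, again contradicting row independence. If $r = m$, I choose a basis $u_{i_1}^W,\ldots,u_{i_m}^W$ with indices in $\{2,\ldots,n\}$ and expand $u_1^W = \sum_k c_k u_{i_k}^W$; since $u_1^W \neq 0$ some $c_\ell \neq 0$, and taking $T = \{i_1,\ldots,i_m\}\setminus\{i_\ell\}$ rearranges $u_1^W \in \operatorname{span}\{u_i^W : i \in T\}$ into $u_{i_\ell}^W \in \operatorname{span}\{u_{i_k}^W : k \neq \ell\}$, contradicting the basis property. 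The main obstacle relative to Theorem~\ref{MainTheorem1} is precisely this fixed eigenvector $u_1$: Galois propagation alone controls only the subspace spanned by $u_2,\ldots,u_n$, and the rank/basis analysis above, combined with Lemma~\ref{complement}, is what handles the interaction of $u_1$ with a minor it appears in.
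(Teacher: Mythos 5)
Your proof is correct, and while it shares the paper's skeleton (normalize the position of the index $1$ via Lemma~\ref{complement}, then propagate the hypothesized dependence using the homogeneity of $A_{n-1}$ acting on $\{2,\ldots,n\}$), the decisive step is genuinely different. The paper normalizes in the opposite direction: it assumes $1\notin S$, uses $m$-homogeneity to force $\operatorname{rank}(\mathbf{U}_{W,\{2,\ldots,n\}})=m-1$, converts this into a dependence $\sum_{w\in W}a_w\tilde{u}(w)=0$ among the \emph{rows} restricted to columns $\{2,\ldots,n\}$, and then exploits orthonormality of the rows of $\mathbf{U}$ --- pairing with a row $u(v)$ for $v\notin W$ kills everything except the first-column contribution $u_1(v)\cdot\sum_{w\in W}a_wu_1(w)$, which is nonzero since $u_1$ is constant and the full rows are independent. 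You instead keep $1\in S$, split on whether $u_1$ actually participates in the dependence, and close the $a_1\neq 0$ case with a rank/basis-exchange argument that uses only column orthogonality (through Lemma~\ref{complement}) and linear independence of the rows; you never need the row-orthogonality computation, and you only use $u_1^W\neq 0$ on $W$ rather than $u_1(v)\neq 0$ off $W$. The paper's argument is shorter; yours is longer but makes the role of the fixed eigenvector completely explicit and sidesteps the slightly awkward step of applying Galois automorphisms in a setting where the normalization $u_1(v)=1/\sqrt{n}$ need not lie in the splitting field (your rescaling to $u_1=\mathds{1}$ handles this cleanly). One cosmetic quibble: irreducibility of $\xi$ follows from $\Gal(\xi)\geq A_{n-1}$ only for $n\geq 4$ (for $n=3$ the group $A_2$ is trivial), so your parenthetical ``$n\geq 3$'' should read ``$n\geq 4$''; this affects only degenerate small cases where the hypothesis is essentially vacuous, and the paper's proof makes the same implicit assumption.
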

\begin{proof}
    By Lemma \ref{complement} we can assume $1 \notin S$. If $\operatorname{det}\left(\mathbf{U}_{W,S}\right)=0$ for some $W,S \in \binom{[n]}{m}$, we know that $\operatorname{det}\left(\mathbf{U}_{W,S'}\right)$=0 for all $S' \in \binom{\{2,\ldots,n\}}{m}$ by applying the $m$-homogeneous Galois group to the equation and hence
    \[
    \operatorname{rank}\left(\mathbf{U}_{W,\{2,\ldots,n\}}\right)=m-1.
    \]
    Thus there are coefficients $a_w \in \K$ for $w \in W$ such that
    \[\sum_{w \in W} a_w \tilde{u}(w)=0,\]
    where $\tilde{u}(w)=(u_i(w))_{i=2,\ldots,n}$. For $v \notin W$ we know that $\langle u(v),u(w) \rangle=0$ and hence
    \begin{align*}
    0&=\langle u(v), \sum_{w \in W} a_w u(w) \rangle \\
    &= \langle u_1(v), \sum_{w \in W} a_w u_1(w) \rangle + \langle \tilde{u}(v),\sum_{w \in W} a_w \tilde{u}(v) \rangle \\
    &= u_1(v) \cdot \sum_{w \in W}a_wu_1(w).
    \end{align*}
    Now since the vectors $u(w)$ are linearly independent for $w \in W$ we know that $\sum_{w \in W}a_wu_1(w)\neq 0$ and together with $u_1(v)=\frac{1}{\sqrt{n}}\neq 0$ this yields a contradiction.
\end{proof}
Theorem \ref{GraphThm} follows now from the fact that $A_{n-1}$ and $S_{n-1}$ are $m$-homogeneous and Lemma \ref{complement}. Moreover, if $\xi$ is irreducible, it also follows directly from Theorem \ref{maintheorem} that all eigenvectors do not contain a zero. Furthermore, if $\operatorname{Gal}(\xi)$ is $m$-homogeneous, then all minors of $\mathbf{U}$ of size $m$ are nonzero.
\begin{theorem}
    Let $G$ be a graph on $n$ vertices, $0 \neq x\in \R^n$ and let $\mathbf{W}(x)$ be the walk matrix for $\mathbf{M}=\mathbf{L}$ with characteristic polynomial $\varphi_L=\l \cdot \xi$.
    \begin{enumerate}
        \item If $\xi(\l)$ is irreducible and $x \in \Q^n$, then
        \[\operatorname{rank}(\mathbf{W}(x))=n.\]
        \item For more general $0 \neq x \in \R^n$ we have
        \[\operatorname{rank}(\mathbf{W})=\|\mathbf{U}^*x\|_0=\|x\|_0.\]
        \item If all minors of $\mathbf{U}$ are nonvanishing, the following formula holds
        \[\operatorname{rank}(\mathbf{W})\geq n+1-\|x\|_0.\]
    \end{enumerate}
\end{theorem}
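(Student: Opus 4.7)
The plan is to mirror the proof of Theorem \ref{WalkAdjacency} almost verbatim, making only the bookkeeping changes forced by the factorization $\varphi_{\mathbf{L}} = \lambda \cdot \xi$. First I would diagonalize $\mathbf{L} = \mathbf{U}\Lambda\mathbf{U}^*$ and, for a polynomial $Q(\lambda) = \sum_{k=0}^r a_k \lambda^k$, compute
\[
(Q(\mathbf{L})\,x)(v) \;=\; \sum_{l=1}^n u_l(v)\, Q(\lambda_l)\, \langle u_l, x\rangle
\]
exactly as in the adjacency case. Since the columns of $\mathbf{U}$ form a basis, $Q(\mathbf{L})x = 0$ is equivalent to $Q(\lambda_l)\,\widehat{x}(l) = 0$ for every $l$. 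Hence the minimal such $Q$ has degree equal to $\#\{l : \widehat{x}(l) \neq 0\}$, which identifies $\operatorname{rank}(\mathbf{W}(x))$ with $\|\mathbf{U}^{*}x\|_0 = \|\widehat{x}\|_0$. This already gives (2).

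For (1), the key observation is that if $x \in \Q^n$ then the minimal relation's coefficients $a_k$ lie in $\Q$, so $Q \in \Q[\lambda]$ and $Q$ divides the minimal polynomial of $\mathbf{L}$, which divides $\lambda \cdot \xi$. Irreducibility of $\xi$ then forces $Q \in \{\lambda,\, \xi,\, \lambda\cdot \xi\}$ up to constants. The case $Q = \lambda$ would mean $\mathbf{L}x = 0$, which is excluded generically; in the nondegenerate case ($\widehat{x}(1) \neq 0$, i.e.\ $\mathbf{1}^T x \neq 0$ for a connected graph, and some $\widehat{x}(l) \neq 0$ for $l \geq 2$) the existence of a root of $Q$ among the roots of $\xi$ together with irreducibility forces $\xi \mid Q$, and combining with $\widehat{x}(1)\neq 0$ gives $Q = \lambda \cdot \xi$, whence $\operatorname{rank}(\mathbf{W}(x)) = n$.

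For (3), assume all minors of $\mathbf{U}$ are nonzero. Then Theorem \ref{minoruncertainty} applies to $\mathbf{U}$ and its conjugate, giving the uncertainty estimate
\[
\|x\|_0 + \|\widehat{x}\|_0 \;\geq\; n+1.
\]
Combining this with (2) yields $\operatorname{rank}(\mathbf{W}(x)) = \|\widehat{x}\|_0 \geq n+1 - \|x\|_0$, which is the desired inequality.

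The only subtle point is handling the spurious factor $\lambda$ in $\varphi_{\mathbf{L}}$ in part (1): unlike the adjacency setting, divisibility of $Q$ by $\xi$ alone gives rank $n-1$ rather than $n$, so one has to invoke a mild genericity hypothesis on $x$ (namely $\mathbf{1}^T x \neq 0$, i.e.\ $\widehat{x}(1) \neq 0$) to upgrade to rank $n$; otherwise the statement should be read as $\operatorname{rank}(\mathbf{W}(x)) \in \{n-1,n\}$ according to whether $x$ is orthogonal to the kernel eigenvector of $\mathbf{L}$. Apart from this bookkeeping, the computation is a direct transcription of the adjacency argument.
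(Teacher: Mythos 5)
Your proof follows the paper's approach exactly: the paper's entire argument for this theorem is the one-line remark that one repeats the proof of Theorem \ref{WalkAdjacency} with $\lambda_1=0$ and $u_1=\mathds{1}$, which is precisely the transcription you carry out (including the silent correction of the apparent typo $\|x\|_0$ for $\|\hat{x}\|_0$ in part (2)). Your caveat on part (1) is not mere bookkeeping but a genuine correction to the statement that the paper glosses over: for $x\in\Q^n$ with $\mathds{1}^{T}x=0$ (e.g.\ $x=e_1-e_2$) the minimal polynomial relative to $x$ is $\xi$ and $\operatorname{rank}(\mathbf{W}(x))=n-1$, while for $x$ a multiple of $\mathds{1}$ the rank is $1$, so irreducibility of $\xi$ alone does not force rank $n$ and the extra hypotheses you identify ($\hat{x}(1)\neq 0$ and $x$ not proportional to $\mathds{1}$) are actually needed.
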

The only change in the proof in comparison to Theorem \ref{WalkAdjacency} is the use of $\l_1=0$ and $u_1=\mathds{1}$.
\begin{theorem}
    Let $G$ be a graph on $n$ vertices and $\mathbf{U}$ be the eigenvector matrix of the Laplacian matrix $\mathbf{L}$ with characteristic polynomial $\varphi=\l \cdot \xi$. The matrix $\mathbf{G}(S,W,r_1,\ldots,r_k)$ has full rank, if $\Gal(\xi)\geq A_{n-1}$ and  $d(v_i,v_j)>r_i+r_j$ for all $1 \leq i,j \leq k$.
\end{theorem}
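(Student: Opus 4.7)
The plan is to mirror the proof of Theorem \ref{radii}, adjusting for the fact that $\Gal(\xi)$ fixes the distinguished index $1$ corresponding to $\lambda_1 = 0$ and $u_1 = \mathds{1}/\sqrt{n}$, rather than acting transitively on all of $[n]$.

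First, assume for contradiction that $\operatorname{rank}(\mathbf{G}(S, W, r_1, \ldots, r_k)) < s$, producing $(a_i)_{i \in S}$, not all zero, with $\sum_{i \in S} a_i \lambda_i^k u_i(v) = 0$ for every $v \in W$ and $0 \leq k \leq r_v - 1$. A preliminary reduction handles the case $1 \in S$: because $\lambda_1 = 0$, the equations with $k \geq 1$ only involve indices in $S \setminus \{1\}$ and give a column dependence in the smaller $\mathbf{G}(S \setminus \{1\}, W', r_v - 1)$ with $W' = \{v \in W : r_v \geq 2\}$; the distance condition only weakens, so a recursive application of the main argument combined with $\lambda_i \neq 0$ for $i \geq 2$ kills $a_i$ for $i \neq 1$, and the $k = 0$ equation then kills $a_1$ as well. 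Assuming therefore $1 \notin S$, the $s$-homogeneity of $\Gal(\xi) \geq A_{n-1}$ on $\{2, \ldots, n\}$ extends the dependence to every $s$-subset $S' \subseteq \{2, \ldots, n\}$, forcing $\operatorname{rank}(\mathbf{G}(\{2, \ldots, n\}, W, r_1, \ldots, r_k)) \leq s - 1$.

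Next, with $Z_{v, k}$ denoting the $(v, k)$-row of $\mathbf{G}([n], W, r_1, \ldots, r_k)$, the identity
\[
\langle Z_{v, k}, Z_{w, l}\rangle = \sum_{i=1}^n \lambda_i^{k+l} u_i(v) u_i(w) = (\mathbf{L}^{k+l})_{v, w}
\]
(using $\lambda_1 = 0$ to suppress the $i = 1$ term when $k + l \geq 1$, and orthonormality of $\mathbf{U}$ when $k = l = 0$), combined with the distance hypothesis, makes the inner product vanish whenever $v \neq w \in W$. The vertex row-blocks are therefore pairwise orthogonal, and each has rank $r_v$ by the Vandermonde identity and Theorem \ref{maintheorem}, which provides $u_i(v) \neq 0$ everywhere. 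Hence $\operatorname{rank}(\mathbf{G}([n], W, r_1, \ldots, r_k)) = \sum_v r_v = m$, which combines with the Galois bound on $\{2, \ldots, n\}$ to contradict the assumed dependence in the $m \geq s$ regime.

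The main obstacle, relative to the adjacency proof, is that $\lambda_1 = 0$ is fixed by the Galois action, producing a one-dimensional gap between the Galois-induced upper bound on the rank (which lives on $\{2, \ldots, n\}$) and the row-orthogonality lower bound (which lives on $[n]$); it is the reduction to $1 \notin S$, echoing the proof of Theorem \ref{maintheorem} with its central use of $u_1 = \mathds{1}/\sqrt{n}$, that closes this gap.
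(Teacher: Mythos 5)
Your setup is right and much of it matches the paper: you propagate the column dependence with the $s$-homogeneous action of $\Gal(\xi)$ on $\{2,\ldots,n\}$ to get $\operatorname{rank}(\mathbf{G}(\{2,\ldots,n\},W,r_1,\ldots,r_k))\le s-1$, and you use $\langle Z_{v,k},Z_{w,l}\rangle=(\mathbf{L}^{k+l})_{v,w}$ together with the distance condition and the Vandermonde structure to get a row-rank lower bound; your explicit reduction to $1\notin S$ is a sensible addition that the paper leaves implicit. However, there is a genuine off-by-one gap in the concluding step. Your upper bound lives on the columns $\{2,\ldots,n\}$ (rank at most $s-1$), while your lower bound lives on $[n]$ (rank equal to $m$); since these two matrices differ by a single column, all you can conclude is $m\le (s-1)+1=s$. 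In the square case $m=s$ --- which includes the most basic instance, all radii equal to $1$, where $\mathbf{G}(S,W,1,\ldots,1)$ is just the $s\times s$ minor $\mathbf{U}_{W,S}$ --- this is perfectly consistent with the assumed dependence and yields no contradiction. The ``one-dimensional gap'' you name in your last paragraph is exactly this, and the reduction to $1\notin S$ does not close it: that reduction only concerns which columns carry the assumed dependence, not the mismatch between where the two rank bounds are computed.

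The paper closes the gap by proving the row-rank lower bound $m$ directly for $\mathbf{G}(\{2,\ldots,n\},W,r_1,\ldots,r_k)$: for $k+l>0$ the $i=1$ term contributes nothing to $\sum_i \lambda_i^{k+l}u_i(v)u_i(w)$ because $\lambda_1=0$, so the cross-block orthogonality already holds on the columns $\{2,\ldots,n\}$, and the power-$0$ rows --- which are \emph{not} orthogonal across blocks there, since $\sum_{i=2}^n u_i(v)u_i(w)=-1/n$ for $v\neq w$ --- are shown to be linearly independent by Theorem \ref{maintheorem}. This gives $m\le s-1$, contradicting $m\ge s$. If you replace your rank computation on $[n]$ by this two-case argument on $\{2,\ldots,n\}$, keeping the rest of your write-up (including the $1\in S$ reduction), the proof goes through.
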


\begin{proof}
    By a similar argument as in the proof of Theorem \ref{radii}, we get that singularity of $\mathbf{G}(S,W,r_1,\ldots,r_k)$ yields singularity of $\mathbf{G}(\{2,\ldots,n\},W,r_1,\ldots,r_k)$. Let $Z_{v,m}$ be the row of $\mathbf{G}(\{2,\ldots,n\},W,r_1,\ldots,r_k)$ belonging to vertex $v$ and power $m$. For $m+l>0$ we have
    \begin{align*}
    \langle Z_{v,m},Z_{w,l} \rangle &= \sum_{i=2}^n\l^{m+l}u_i(v)u_i(w)\\
    &=\mathbf{L}_{v,w}^{m+l}.
    \end{align*}
    The radius condition yields orthogonality for $m+l>0$. The rows belonging to power $0$ can not be linearly dependent according to Theorem \ref{maintheorem}.
\end{proof}

We finally provide two examples.
\begin{example}
The graph in Figure \ref{imprimitivenonzero} has the characteristic polynomial
\[\varphi=\lambda \cdot \left(\lambda^8 - 28\lambda^7 + 332\lambda^6 - 2170\lambda^5 + 8516\lambda^4 - 20440\lambda^3 + 29105\lambda^2 - 22288\lambda + 6957\right).\]
The Galois group  of $\xi$ has order $384$ and is given by
\[
\Gal (\xi) \cong \left\{ \sigma \in S_8 \colon \sigma(i+4)=\begin{cases} \sigma(i)+4 \text{ , for $\sigma(1) \leq 4$}, \\ \sigma(i)-4 \text{ , for $\sigma(1) \geq 5$,}  \end{cases} \text{ for } 1 \leq i \leq 4 \right\}.
\]
This group is imprimitive but the minors of any size are nonzero. We see that our criterion is not necessary.

\begin{figure}[H]
\centering
\begin{tikzpicture}[scale=1.5]
    \filldraw[black] (0,0) circle (1pt) node[anchor=south] {1};
    \filldraw[black] (1,0) circle (1pt) node[anchor=south] {2};
    \filldraw[black] (2,0) circle (1pt) node[anchor=south] {3};
    \filldraw[black] (0.5,-0.5) circle (1pt) node[anchor=south] {4};
    \filldraw[black] (1,-0.5) circle (1pt) node[anchor=west] {5};
    \filldraw[black] (1.5,-0.5) circle (1pt) node[anchor=north west] {6};
    \filldraw[black] (0,-1) circle (1pt) node[anchor=east] {7};
    \filldraw[black] (1,-1) circle (1pt) node[anchor=north west] {8};
    \filldraw[black] (0.5,-1.5) circle (1pt) node[anchor=north] {9};
    \draw (0,0) -- (1,0);
    \draw (1,0) -- (2,0);
    \draw (0,0) -- (0.5,-0.5);
    \draw (0,0) -- (0,-1);
    \draw (0.5,-0.5) -- (1,-0.5);
    \draw (1,0) -- (1,-1);
    \draw (0,-1) -- (0.5,-0.5);
    \draw (0,-1) -- (1,-1);
    \draw (1,-1) -- (1.5,-0.5);
    \draw (1.5,-0.5) -- (2,0);
    \draw (0,-1) -- (0.5,-1.5);
    \draw (0.5,-1.5) -- (1,-1);
    \draw (1,0) -- (1.5,-0.5);
\end{tikzpicture}
\caption{A graph with imprimitive Galois group $\Gal (\xi)<A_{8}$, but nonvanishing minors.}
\label{imprimitivenonzero}
\end{figure}
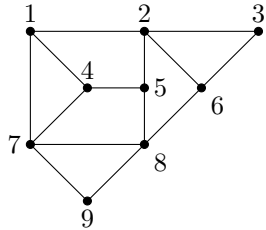
\end{example}

\begin{example}

The graph in Figure \ref{irreduciblezero} has the characteristic polynomial
\[\varphi=\lambda \cdot \left( \lambda^6-12\lambda^5+54\lambda^4-114\lambda^3+115\lambda^2-50\lambda+7\right).\]
The Galois group of $\xi$ has order 72 and is given by
\[
\Gal(\xi) \cong \left\{ \sigma \in S_6 \colon \sigma(\{1,2,3\})\in \{\{1,2,3\},\{4,5,6\}\}\right\}.
\]
This group is imprimitive and the vanishing minors of size 3 are supported on the columns $\{1,2,3\}$ or $\{4,5,6\}$.
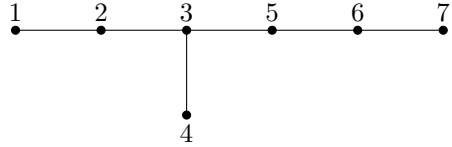
\begin{figure}[H]
\centering
\begin{tikzpicture}[scale=1.5]
    \filldraw[black] (0,0) circle (1pt) node[anchor=south] {1};
    \filldraw[black] (0.75,0) circle (1pt) node[anchor=south] {2};
    \filldraw[black] (1.5,0) circle (1pt) node[anchor=south] {3};
    \filldraw[black] (2.25,0) circle (1pt) node[anchor=south] {5};
    \filldraw[black] (3,0) circle (1pt) node[anchor=south] {6};
    \filldraw[black] (3.75,0) circle (1pt) node[anchor=south] {7};
    \filldraw[black] (1.5,-0.75) circle (1pt) node[anchor=north] {4};
    \draw (0,0) -- (0.75,0);
    \draw (0.75,0) -- (1.5,0);
    \draw (1.5,0) -- (2.25,0);
    \draw (2.25,0) -- (3,0);
    \draw (3,0) -- (3.75,0);
    \draw (1.5,0) -- (1.5,-0.75);
    
\end{tikzpicture}
\caption{A graph with $1$-homogeneous Galois group, but vanishing minors.}
\label{irreduciblezero}
\end{figure}
\end{example}

\bibliographystyle{abbrv}
\bibliography{refs}

\end{document}